\newcommand{\mathd}{\mathrm{d}}
\newcommand{\tmop}[1]{\ensuremath{\operatorname{#1}}}
\newcommand{\tmtextit}[1]{{\itshape{#1}}}
\newenvironment{proof}{\noindent\textbf{Proof\ }}{\hspace*{\fill}$\Box$\medskip}
\newtheorem{theorem}{Theorem}
\newtheorem{proposition}{Proposition}
\begin{document}

\begin{frontmatter}[classification=text]

\title{The Sharp Square Function Estimate with Matrix Weight} 

\author[T.H.]{Tuomas Hyt\"onen\thanks{Supported by the Finnish Centre of Excellence in Analysis and Dynamics Research}}
\author[S.P.]{Stefanie Petermichl\thanks{Supported by the ERC project CHRiSHarMa DLV-862402}}
\author[A.V.]{Alexander Volberg\thanks{Supported by the NSF grant DMS-160065}}



%
%
%


\begin{abstract}
  We prove the matrix $A_2$ conjecture for the dyadic square function, that
  is, an estimate of the form
  \[ \| S_W \|_{L_{\mathbb{C}^d}^2(W) \rightarrow L_{\mathbb{R}}^2} \lesssim
     [W]_{A_2}, \]
  where the focus is on the sharp linear dependence on the matrix $A_2$ constant.
  Moreover, we give a mixed estimate in terms of $A_2$ and $A_{\infty}$ constants. The key
  to the proof is a sparse domination of a process inspired by the integrated form of the
  matrix--weighted square function.
\end{abstract}
\end{frontmatter}

\section{Introduction}

The theory of weights has drawn much attention in recent years. In the
scalar--valued setting, we say that a non--negative locally integrable
function is a dyadic $A_2$ weight iff
\[ [w]_{A_2} = \sup_I \langle w \rangle_I \langle w^{- 1} \rangle_I < \infty ,
\]
where the supremum runs over dyadic intervals and $\langle \cdot \rangle_I$
returns the average of a function over the interval $I$. It is classical that
this is the necessary and sufficient condition for the dyadic square function,
maximal function, Hilbert transform and Calder\'on--Zygmund operators to be
bounded on
\[ L_{\mathbb{R}}^2 (w) = \left\{ f : \| f \|^2_{L_{\mathbb{R}}^2 (w)} =
   \int | f |^2 w < \infty \right\} . \]

Partly inspired by applications to PDE, but also interesting in their own
right, the precise dependence on the $A_2$ characteristic for the different
operators has been under intensive investigation -- these questions became
known as $A_2$ conjectures. Most such improved estimates came at the cost and
benefit of an array of fantastic new ideas and techniques in harmonic
analysis. The first such example was the sharp weighted estimate of the dyadic
square function by Hukovic--Treil--Volberg {\cite{HTV}} followed by the
martingale multiplier by Wittwer {\cite{W}}, the Beurling operator by
Petermichl--Volberg {\cite{PV}}, the Hilbert and Riesz transforms by
Petermichl {\cite{Philbert, Priesz}} and all Calder\'on--Zygmund
operators by Hyt\"onen {\cite{HallCZO}}. Somewhat later it has been discovered
that many of these estimates can be slightly improved by replacing a half
power of these norm estimates by a half power of the smaller $A_{\infty}$
norm, the best estimate $C$ in the inequality
\[ \langle M_I w \rangle_I \leqslant C \langle w \rangle_I ,\]
where $M_I$ is the dyadic maximal function localized to $I$, see
{\cite{HP, HL, LL}}. The field has since seen beautiful new proofs of these
optimal results {\cite{Le1, La, HPTV, LPR, T}} and
many extensions far beyond Calder\'on--Zygmund theory
{\cite{HRT, BFP, CDO, Le, BBL}}.

\

Inspired by applications to multivariate stationary stochastic processes, a
theory of matrix weights was developped by Treil and Volberg
{\cite{TVangle}}, where the necessary and
sufficient condition for boundedness of the Hilbert transform was found, the
matrix $A_2$ characteristic. Aside from an early, excellent estimate for a
natural maximal function in this setting by Christ--Goldberg
{\cite{CG, IKP}}, the optimal norm estimates for singular operators
seemed out of reach. Some improvement was achieved in Bickel--Petermichl--Wick
{\cite{BPW}} with a new estimate for Hilbert and martingale transforms of
$[W]^{3 / 2}_{A_2} \log (1 + [ W ]_{A_2})$. Recently, by
Nazarov--Petermichl--Treil--Volberg {\cite{NPTV}} the logarithmic term was
dropped and it was shown that for all Calder\'on--Zygmund operators there holds
an estimate of the order $[W]^{3 / 2}_{A_2}$.

In {\cite{BPW}} a notable improvement to the dyadic square function estimates of {\cite{TVangle, Vmatrix}} was given, namely
\[ \| S_W \|_{L_{\mathbb{C}^d}^2(W) \rightarrow L_{\mathbb{R}}^2} \lesssim [W]_{A_2} \log (1 + [ W ]_{A_2}). \]
The above estimate only features an extra logarithmic term as compared to the
sharp, linear estimate in the scalar case {\cite{HTV}}. Despite the advance on
the matrix weighted Carleson lemma in {\cite{CNT}}, a
natural tool for square function estimates in the scalar setting, the
logarithmic term could not be removed. One of the many difficulties arising,
stem from the non--commutativity and it seems that most convex functions are
not matrix convex. In this paper, we remove the logarithmic term by other
means and give the first sharp estimate of a singular operator in the matrix
weighted setting:
\[ \| S_W \|_{L_{\mathbb{C}^d}^2(W) \rightarrow L_{\mathbb{R}}^2} \lesssim [W]_{A_2} . \]
This estimate is known to be optimal among all upper bounds of the form $\phi([W]_{A_2})$ even in the scalar setting.
The scalar example for sharpness directly implies matrix examples of all dimensions by considering weights of the form $w\otimes I_d$, where $I_d$ is the identity matrix in dimension $d$. Allowing for a more general dependence on the weight $W$, we even show that
\[ \| S_W \|_{L_{\mathbb{C}^d}^2(W) \rightarrow L_{\mathbb{R}}^2} \lesssim [W]^{1 / 2}_{A_2} [W^{-1}]^{1 / 2}_{A_{\infty}} \]
using the matrix $A_{\infty}$ characteristic. 
This coincides with the mixed $A_2$--$A_\infty$ bound in the scalar case obtained in \cite{LL}.

\section{Notation}

Let $\mathcal{D}$ be the collection of dyadic subintervals of $J = [0, 1]$.
We call a $d \times d$ matrix--valued function
$W$ a weight, if $W$ is entry--wise locally integrable and if $W (x)$ is
positive semidefinite almost everywhere. One defines $L_{\mathbb{C}^d}^2 (W)$
to be the space of vector functions with
\[ \| f \|^2_{L_{\mathbb{C}^d}^2 (W)} = \int_J \| W^{1 / 2} (x) f (x)
   \|_{\mathbb{C}^d}^2 \mathd x = \int_J \langle W (x) f (x), f (x)
   \rangle_{\mathbb{C}^d} \mathd x < \infty . \]
The dyadic matrix Muckenhoupt $A_2$ condition is
\[ [W]_{A_2} = \sup_{I \in \mathcal{D}} \| \langle W \rangle^{1 / 2}_I \langle
   W^{- 1} \rangle_I^{1 / 2} \|^2 < \infty , \] where we mean the operator norm.  
The dyadic matrix $A_{\infty}$ condition is
\[ [W]_{A_{\infty}} = \sup_{e \in \mathbb{C}^d} [W_e]_{A_{\infty}} \]
where $W_e (x) = \langle W (x) e, e \rangle_{\mathbb{C}^d}.$ Note that the norm of $e$ is irrelevant in the definition, since a constant multiple of a weight has the same $A_\infty$ norm. After introducing matrix $A_\infty$ and beforeLet us recall
the fact that $[W]_{A_{\infty}} \lesssim [W]_{A_2}$; see \cite{NPTV}, Section 4.


\

Let $h_I = | I |^{-1 / 2} (\chi_{I_+} - \chi_{I_-})$ be the
$L_{\mathbb{R}}^2$ normlized Haar function and let with $\sigma_I = \pm 1$
\[ T_{\sigma} f (x) = \sum_{I \in \mathcal{D}} \sigma_I (f, h_I) h_I (x), \]
defined both on scalar--valued as well as vector--valued functions $f$. Recall
that the dyadic square function for real--valued, mean zero functions $f$ is
defined as
\[ S f (x)^2 = \sum_I | (f, h_I) |^2 \frac{\chi_I (x)}{| I |} . \]
Its classical vector analog becomes the scalar--valued function
\[ S f (x)^2 = \sum_I \| (f, h_I) \|_{\mathbb{C}^d}^2 \frac{\chi_I (x)}{| I
   |} . \]
When working with matrix weights, it is customary to include the weight in the
definition of the (scalar--valued) operator, such as done for example by
Christ--Goldberg {\cite{CG}} for the maximal function
\[ M_W f (x) = \sup_{I : x \in I} \frac{1}{| I |} \int_I \| W^{1 / 2} (x) W^{-
   1 / 2} (y) f (y) \|_{\mathbb{C}^d} \mathd y. \]
Recall that $S_W : L^2_{\mathbb{C}^d}(W) \rightarrow L^2_{\mathbb{R}}$ is
defined by (see {\cite{PPburkholder}})
\[ S_W f (x)^2 =\mathbb{E} (\| W (x)^{1 / 2} T_{\sigma} f (x)
   \|^2_{\mathbb{C}^d}), \]
 where $\mathbb{E}$ is the expectation over independent uniformly distributed random signs $\sigma_I=\pm 1$.
One calculates
\begin{eqnarray*}
  \| S_W f \|_{L_{\mathbb{R}}^2}^2 
  & = & 
  \int_J \mathbb{E} \left( \sum_{I,  I'} \sigma_I \sigma_{I'} h_I (x) h_{I'} (x) 
  \left\langle W (x) (f, h_I), (f, h_{I'}) \right\rangle_{\mathbb{C}^d} \right) \mathd x\\
  & = & \int_J \sum_{I, I'} \mathbb{E} (\sigma_I \sigma_{I'}) h_I (x) h_{I'}
  (x) \left\langle W (x) (f, h_I), (f, h_{I'}) \right\rangle_{\mathbb{C}^d} \mathd x\\
  & = & \sum_I \left\langle \langle W \rangle_I (f, h_I), (f, h_I)
  \right\rangle_{\mathbb{C}^d} .
\end{eqnarray*}
The study of these sums was introduced by Volberg in {\cite{Vmatrix}}. Indeed,
in the scalar setting, this square function $S_w$ is bounded into the unweighted
$L^2$ if and only if the classical dyadic square function is bounded into the
weighted $L^2 (w)$:
\[ \| S_w f \|_{L_{\mathbb{R}}^2}^2 = \sum_I \langle w \rangle_I | (f, h_I)
   |^2 = \| S f \|_{L_{\mathbb{R}}^2 (w)}^2 . \]

\section{Results}

Here is our main theorem.

\begin{theorem}\label{thm:A2}
  \[ S_W : L^2_{\mathbb{C}^d}(W) \rightarrow L_{\mathbb{R}}^2 \]
  has operator norm bounded by a constant multiple of
  \[ [W]^{1 / 2}_{A_2} [W^{-1}]^{1 / 2}_{A_{\infty}} \lesssim [W]^1_{A_2} . \]
  This estimate is sharp among all upper bounds of the form $\phi([W]_{A_2})$.
\end{theorem}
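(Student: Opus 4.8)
The plan is to pass from the operator norm to the quadratic form isolated just before the statement and then to dominate that form by a sparse expression. By the computation preceding the theorem, $\|S_W f\|^2_{L^2_{\mathbbm{R}}} = \mathcal{A}(f) := \sum_{I\in\mathcal{D}}\langle\langle W\rangle_I(f,h_I),(f,h_I)\rangle_{\mathbbm{C}^d}$, so it suffices to prove $\mathcal{A}(f)\lesssim[W]^{1/2}_{A_2}[W^{-1}]^{1/2}_{A_\infty}\,\|f\|^2_{L^2_{\mathbbm{C}^d}(W)}$. The second, purely $A_2$, bound then comes for free: the $A_2$ condition is symmetric under $W\mapsto W^{-1}$, whence $[W^{-1}]_{A_2}=[W]_{A_2}$, and $[W^{-1}]_{A_\infty}\lesssim[W^{-1}]_{A_2}=[W]_{A_2}$ by the fact recalled in Section 2, giving exactly the displayed chain. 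Sharpness likewise needs no new work: taking $W=w\,\mathrm{Id}$ a scalar matrix reduces $S_W$ to the scalar weighted square function with $[W]_{A_2}=[w]_{A_2}$, so the known scalar lower bound of \cite{HTV} already forbids any improvement of the form $\phi([W]_{A_2})$.

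First I would normalize the right-hand side by setting $g=W^{1/2}f$, so that $\|f\|_{L^2_{\mathbbm{C}^d}(W)}=\|g\|_{L^2_{\mathbbm{C}^d}}$ and $\langle W\rangle_I^{1/2}(f,h_I)=\langle W\rangle_I^{1/2}(W^{-1/2}g,h_I)$; the weight governing the averages of $W^{-1/2}g$ is then $W^{-1}$, which is why its $A_\infty$ characteristic enters. The object to control is the process with vector increments $d_I=\langle W\rangle^{1/2}_I(W^{-1/2}g,h_I)$, whose squared $\ell^2$-mass is precisely $\mathcal{A}(f)$. The difficulty is that the reducing operators $\langle W\rangle^{1/2}_I$ depend on $I$ and do not commute across scales, so the scalar identity $\sum_{I\subseteq Q}|(\cdot,h_I)|^2=\int_Q|\cdot-\langle\cdot\rangle_Q|^2$ cannot be applied after simply factoring out one matrix.

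The heart of the argument is a stopping-time construction producing a sparse family $\mathcal{S}\subseteq\mathcal{D}$, that is, a collection carrying pairwise disjoint major subsets $E_Q\subseteq Q$ with $|E_Q|\geq\tfrac12|Q|$. I would declare the stopping children of a cube $Q\in\mathcal{S}$ to be the maximal $I\subseteq Q$ at which the reducing operator $\langle W\rangle_I^{1/2}$ (equivalently $\langle W^{-1}\rangle_I$) has moved away from $\langle W\rangle_Q^{1/2}$ by a fixed multiplicative factor. On the region between $Q$ and its stopping children the weight is frozen up to constants, which tames the non-commutativity: there $\langle W\rangle_I^{1/2}\approx\langle W\rangle_Q^{1/2}$, and the local sum telescopes by Haar orthogonality into the local oscillation $\int_Q\|\langle W\rangle_Q^{1/2}(W^{-1/2}g-\langle W^{-1/2}g\rangle_Q)\|^2$ on $Q$. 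This dominates $\mathcal{A}(f)$ by a sparse form $\sum_{Q\in\mathcal{S}}|Q|\,\Phi_Q$ with $\Phi_Q$ a frozen-weight local energy. Crucially, how many stopping generations are needed before $\langle W\rangle^{1/2}$ moves a fixed amount is governed by the Fujii–Wilson/Carleson form of the matrix $A_\infty$ characteristic of $W^{-1}$, which is what prevents $\mathcal{S}$ from overcounting.

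It remains to sum the sparse form. Here I would use the $A_2$ condition to exchange the frozen $\langle W\rangle_Q$ for $\langle W^{-1}\rangle_Q^{-1}$ at the cost of one factor $[W]^{1/2}_{A_2}$, the $A_\infty$-Carleson packing of $\mathcal{S}$ to supply the factor $[W^{-1}]^{1/2}_{A_\infty}$, and the disjointness of the major subsets $E_Q$ to reassemble the local $\langle|W^{-1/2}g|\rangle_Q$-type terms into $\|g\|^2_{L^2}=\|f\|^2_{L^2_{\mathbbm{C}^d}(W)}$. I expect the main obstacle to be exactly the non-commutativity flagged in the introduction: unlike the scalar case there is no Bellman or convexity shortcut, so every comparison of $\langle W\rangle_I$ across scales must be routed through the reducing operators and paid for by the $A_\infty$ stopping time. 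The delicate point is to arrange the freezing so that the two half-powers $[W]^{1/2}_{A_2}$ and $[W^{-1}]^{1/2}_{A_\infty}$ appear separately, rather than collapsing into a single lossy factor of $[W]_{A_2}$ and reintroducing the logarithm of the earlier estimate in \cite{BPW}.
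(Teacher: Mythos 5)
Your overall architecture --- reducing to the quadratic form $\sum_I\langle\langle W\rangle_I(f,h_I),(f,h_I)\rangle_{\mathbbm{C}^d}$, running a stopping time that freezes the reducing operators, summing a sparse form, getting sharpness from the scalar case and the pure-$A_2$ bound from $[W^{-1}]_{A_\infty}\lesssim[W^{-1}]_{A_2}=[W]_{A_2}$ --- matches the paper. But there is a genuine gap at the decisive step. Your stopping construction (freezing $\langle W\rangle$ only) produces local energies of $L^2$-average type, $\Phi_Q\approx\bigl\langle\|\langle W\rangle_Q^{1/2}W^{-1/2}g\|_{\mathbbm{C}^d}^2\bigr\rangle_Q$, and a sparse sum $\sum_{Q\in\mathcal S}|Q|\,\Phi_Q$ of such $L^2$ averages cannot be ``reassembled'' into $\|g\|_{L^2}^2$ via the disjoint major subsets: the overlap $\sum_{Q\in\mathcal S}\chi_Q$ of a sparse family is unbounded, so $\sum_{Q\in\mathcal S}\int_Q\|h\|^2\not\lesssim\|h\|_{L^2}^2$ in general (take $h$ concentrated near a point lying in an infinite chain of the family). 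After exchanging $\langle W\rangle_Q$ for $\langle W^{-1}\rangle_Q^{-1}$ via $A_2$, what remains is precisely the matrix-weighted Carleson embedding with $L^2$ data --- the very point at which \cite{CNT}, \cite{BPW} lose the logarithm, as the introduction explains. The paper's key extra idea, absent from your sketch, is a \emph{second} stopping condition: one also stops at $L$ when the partial unweighted square function of $g_J=\langle W\rangle_J^{1/2}f$ exceeds $C_2\langle\|g_J\|_{\mathbbm{C}^d}\rangle_J$. This converts the local data into \emph{squared $L^1$ averages} $\langle\|\langle W\rangle_L^{1/2}f\|_{\mathbbm{C}^d}\rangle_L^2$, i.e.\ exactly the sparse form of Proposition~\ref{prop:sparse}; that form is the sparse square function $S_{3,W}$ of \cite{NPTV}, whose bound $[W]_{A_2}^{1/2}[W^{-1}]_{A_\infty}^{1/2}$ is then quoted rather than re-proved, and sparsity of this second stopping family comes from the weak $(1,1)$ boundedness of the unweighted $S$. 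Without this step your outline reproduces the \cite{BPW} approach and should be expected to reintroduce the logarithm you are trying to avoid.

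Two further misattributions. Sparsity of the weight-freezing family is \emph{not} governed by the Fujii--Wilson $A_\infty$ characteristic of $W^{-1}$ as you claim: the paper proves it by a trace computation,
\[
  C_1^2\sum_L|L|\;<\;\sum_L|L|\,\tmop{tr}\bigl(\langle W\rangle_J^{-1/2}\langle W\rangle_L\langle W\rangle_J^{-1/2}\bigr)\;\leqslant\; d\,|J|,
\]
with constant depending only on the dimension $d$ and requiring no weight condition whatsoever; the factor $[W^{-1}]_{A_\infty}^{1/2}$ enters only inside the \cite{NPTV} estimate of $S_{3,W}$, not through the packing of $\mathcal S$. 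Also, your parenthetical ``equivalently $\langle W^{-1}\rangle_I$'' is false in the matrix setting: $\langle W\rangle_I^{-1}\neq\langle W^{-1}\rangle_I$, and control of the motion of one does not transfer to the other --- this is exactly the kind of non-commutativity trap the paper sidesteps by phrasing the stopping condition as $\|\langle W\rangle_L^{1/2}\langle W\rangle_J^{-1/2}\|>C_1$, with all comparisons routed through the averages of $W$ itself.
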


The previously known best estimate {\cite{BPW}} was $\| S_W \|_{L^2_{\mathbb{C}^d}(W) \rightarrow L_{\mathbb{R}}^2} \lesssim
[W]_{A_2} \log (1 + [W]_{A_2})$. With a different method we drop the
logarithmic term and improve the single power $A_2$ constant to split into a
$A_2 - A_{\infty}$ estimate.

A key to the proof is the following sparse domination result of independent interest. Recall that a collection of intervals $\mathcal S\subset\mathcal D$ is called sparse, if there are disjoint subsets $E(I)\subset I$ for every $I\in\mathcal S$ such that $|E(I)|\geq\frac12|I|$.

\begin{proposition}\label{prop:sparse}
Given $f\in L^2_{\mathbb{C}^d}(W)$, there exists a sparse collection $\mathcal S\subset\mathcal D$ such that
\[
  \|S_W f\|_{L^2_{\mathbb{R}}}^2
  \lesssim \sum_{I\in\mathcal S} \langle \| \langle W
     \rangle^{1 / 2}_I f \|_{\mathbb{C}^d} \rangle_I^2 |I|
    =\Big\|\Big( \sum_{I\in\mathcal S} \langle \| \langle W
     \rangle^{1 / 2}_I f \|_{\mathbb{C}^d} \rangle_I^2 \chi_I\Big)^{1/2}\Big\|_{L^2_{\mathbb{R}}}^2.
\]
\end{proposition}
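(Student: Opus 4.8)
The plan is to combine a stopping-time (Corona) decomposition with the almost-orthogonality of Haar coefficients, leaning on two structural facts about the dyadic averages of $W$ that carry \emph{no} dependence on any weight characteristic. Throughout write $A_I:=\langle W\rangle_I^{1/2}$ and $b_I:=\langle\|A_I f\|_{\mathbbm{C}^d}\rangle_I$, so that $\|S_W f\|_{L^2_{\mathbbm R}}^2=\sum_{I\in\mathcal D}\|A_I(f,h_I)\|_{\mathbbm{C}^d}^2$ and the asserted right-hand side is $\sum_{I\in\mathcal S}b_I^2|I|$. A first observation is the term-by-term bound $\|A_I(f,h_I)\|\lesssim|I|^{1/2}\|A_I(\langle f\rangle_{I_+}-\langle f\rangle_{I_-})\|\lesssim b_I|I|^{1/2}$, from Jensen's inequality: this identifies $b_I^2|I|$ as the correct local size, but summed over all of $\mathcal D$ it diverges, so the whole point is to exploit cancellation and retain only a sparse subfamily. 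The two weight-free facts I will use are the martingale identity $\langle W\rangle_I=\tfrac12(\langle W\rangle_{I_+}+\langle W\rangle_{I_-})$, equivalently $\|A_I v\|^2=\tfrac12(\|A_{I_+}v\|^2+\|A_{I_-}v\|^2)$, and the one-sided comparison $\langle W\rangle_I\preceq 2\langle W\rangle_{\hat I}$ for $\hat I$ the dyadic parent, which forces the normalized oscillations $\langle W\rangle_I^{-1/2}(\langle W\rangle_{I_+}-\langle W\rangle_{I_-})\langle W\rangle_I^{-1/2}$ to be uniformly bounded.

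Next I would build $\mathcal S$ top-down. Given a top interval $Q$ (initially $J$), \emph{freeze} $g_Q:=A_Q f=\langle W\rangle_Q^{1/2}f$ and take as stopping children the maximal $I\subsetneq Q$ with $\langle\|g_Q\|\rangle_I>C\langle\|g_Q\|\rangle_Q$. Since $\|g_Q\|\in L^1$, the weak-$(1,1)$ bound for the scalar dyadic maximal function shows that for $C$ large these have total measure at most $\tfrac12|Q|$; hence $E(Q):=Q\setminus\bigcup(\text{stopping children})$ has $|E(Q)|\ge\tfrac12|Q|$ and $\mathcal S$ is sparse. Crucially only an $L^1$ bound is used, so no weight constant enters. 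On the Corona $\mathcal C_Q$ (intervals from $Q$ down to, but not inside, its stopping children) one has $\langle\|g_Q\|\rangle_I\le C\,b_Q$, and by Lebesgue differentiation $\|g_Q\|\le C\,b_Q$ a.e.\ on $E(Q)$.

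The heart of the matter is the per-Corona estimate $\sum_{I\in\mathcal C_Q}\|A_I(f,h_I)\|^2\lesssim b_Q^2|Q|$, which summed over $Q\in\mathcal S$ gives the claim exactly. For the \emph{frozen} operator this is immediate: writing $\Delta_I g:=(g,h_I)h_I$ for the martingale difference and using $A_Q(f,h_I)=(g_Q,h_I)$, orthogonality of the increments gives $\sum_{I\in\mathcal C_Q}\|A_Q(f,h_I)\|^2=\big\|\sum_{I\in\mathcal C_Q}\Delta_I g_Q\big\|_{L^2}^2$, and the telescoped martingale $\sum_{I\in\mathcal C_Q}\Delta_I g_Q$ is pointwise $O(b_Q)$ by the Corona control of $\|g_Q\|$ on $E(Q)$ and on the stopping children, whence the bound $\lesssim b_Q^2|Q|$. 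The constant $b_Q=\langle\|\langle W\rangle_Q^{1/2}f\|\rangle_Q$ produced here is precisely the target average at the sparse interval $Q$, so the bookkeeping closes.

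The \emph{main obstacle} is the passage from the frozen $A_Q$ back to the true local $A_I$, i.e.\ controlling
\[
  R_Q:=\sum_{I\in\mathcal C_Q}\big\langle(\langle W\rangle_I-\langle W\rangle_Q)(f,h_I),\,(f,h_I)\big\rangle_{\mathbbm{C}^d}
\]
by $b_Q^2|Q|$ with a constant independent of $W$. This is exactly the non-commutative term that blocked earlier approaches and produced the logarithmic loss. I would telescope $\langle W\rangle_I-\langle W\rangle_Q$ along the dyadic chain from $I$ up to $Q$ into the matrix Haar coefficients $\langle W\rangle_{K_+}-\langle W\rangle_{K_-}$, reorganize the resulting double sum so each coefficient is paired against the positive semidefinite block $\sum_{I\subseteq K,\,I\in\mathcal C_Q}(f,h_I)(f,h_I)^{\ast}$, and then exploit the uniform boundedness of the normalized oscillations (from $\langle W\rangle_I\preceq2\langle W\rangle_{\hat I}$) together with the Corona control of $g_Q$. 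The delicate point is that a naive size estimate here reintroduces a square function of $W$, hence a weight constant; the cancellation encoded in the stopping time must be used to keep $R_Q$ genuinely weight-free. This is the step I expect to require the real work, and where the "process" reformulation of the integrated square function should be brought to bear.
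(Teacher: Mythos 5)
Your stopping construction, its sparseness via the weak $(1,1)$ maximal bound, and the frozen per-corona estimate $\sum_{I\in\mathcal C_Q}\|\langle W\rangle_Q^{1/2}(f,h_I)\|_{\mathbbm{C}^d}^2\lesssim b_Q^2|Q|$ via the stopped martingale are all sound, and they parallel the function-side half of the paper's argument (the paper stops on the partial square function of $g_J=\langle W\rangle_J^{1/2}f$ and invokes the weak $(1,1)$ bound for the unweighted $S$ rather than for the maximal function, but this difference is immaterial). The genuine gap is exactly where you place it yourself: the passage from the frozen $\langle W\rangle_Q$ back to the local $\langle W\rangle_I$, i.e.\ the term $R_Q$. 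For this you offer only a plan --- telescoping $\langle W\rangle_I-\langle W\rangle_Q$ into the matrix differences $\langle W\rangle_{K_+}-\langle W\rangle_{K_-}$ and pairing against positive semidefinite blocks --- and, as you observe, the only uniform input available, $\|\langle W\rangle_K^{-1/2}(\langle W\rangle_{K_+}-\langle W\rangle_{K_-})\langle W\rangle_K^{-1/2}\|\leq 2$ from $\langle W\rangle_{K_\pm}\preceq 2\langle W\rangle_K$, gives $O(1)$ per generation and hence $O(\log)$ along a corona chain. That is precisely the mechanism behind the earlier $[W]_{A_2}\log(1+[W]_{A_2})$ bound of \cite{BPW}, and you identify no cancellation that would remove it; your stopping time controls averages of $\|g_Q\|$, which says nothing about the oscillation of the matrix $\langle W\rangle_I$ within the corona. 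As written, the proof is therefore incomplete at its decisive step, and the proposed route for closing it is the one already known to lose a logarithm.

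The paper avoids $R_Q$ altogether by a different device: it adds a second, weight-side stopping condition, stopping at the maximal $L$ with $\|\langle W\rangle_L^{1/2}\langle W\rangle_J^{-1/2}\|>C_1$. Inside the corona one then has the one-sided comparison $\langle W\rangle_I\preceq C_1^2\langle W\rangle_J$, so that $\langle\langle W\rangle_I(f,h_I),(f,h_I)\rangle_{\mathbbm{C}^d}\leq C_1^2\|\langle W\rangle_J^{1/2}(f,h_I)\|_{\mathbbm{C}^d}^2$ term by term: the frozen/local replacement costs one line and produces no error term at all, which is how the non-commutativity obstruction is sidestepped. The price is showing that this extra stopping family is still sparse, and here lies the idea your proposal is missing: a weight-free trace argument, dominating the operator norm by the Hilbert--Schmidt norm to get $C_1^2\sum_L|L|<\sum_L\int_L\operatorname{tr}\bigl(\langle W\rangle_J^{-1/2}W(x)\langle W\rangle_J^{-1/2}\bigr)\,\mathd x\leq d|J|$, a Carleson packing with only the dimensional constant $d$. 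If you graft this condition (in the form $\|\langle W\rangle_I^{1/2}\langle W\rangle_Q^{-1/2}\|>C_1$, with sparseness proved exactly as above) onto your stopping time, your corona and stopped-martingale machinery closes without ever estimating $R_Q$.
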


\begin{proof}[Proof of the Theorem assuming the Proposition]
  Sharpness follows from the scalar case, as explained in the introduction. Let us attend to the upper estimate
  $\|S_W f\|_{L_{\mathbb{R}}^2 }\lesssim [W]^{1 / 2}_{A_2} [W^{-1}]^{1 / 2}_{A_{\infty}}\|f\|_{L^2_{\mathbb{C}^d}(W)}$.
  Substituting $W^{-1/2}f$ in place of $f$ and using the bound from the Proposition, we should show that
  \[
     \Big\|\Big( \sum_{I\in\mathcal S} \langle \| \langle W
     \rangle^{1 / 2}_I W^{-1/2} f \|_{\mathbb{C}^d} \rangle_I^2 \chi_I\Big)^{1/2}\Big\|_{L^2_{\mathbb{R}}}
     \lesssim [W]_{A_2}^{1/2} [W^{-1}]_{A_{\infty}}^{1/2}\|f\|_{L^2_{\mathbb{C}^d}}.
  \]
  But the left hand side is the $L^2_{\mathbb{R}}$ norm of 
  the sparse square function $S_{3,W}f$ defined in {\cite{NPTV}},
  for which it was proved {\cite{NPTV}} that
  \[ \| S_{3, W} \|_{L_{\mathbb{C}^d}^2 \rightarrow L_{\mathbb{R}}^2}
   \lesssim [W]^{1 / 2}_{A_2} [W^{-1}]^{1 / 2}_{A_{\infty}}.
  \]
  And this is exactly the estimate we needed.
\end{proof}
  
\begin{proof}[Proof of the Proposition]
%
  Consider the collection of first stopping intervals $L\subset J$ determined by
  \[ \| \langle W \rangle^{1 / 2}_L \langle W \rangle^{- 1 / 2}_J \| > C_1 \]
  or
  \[ \sum_{I : I \supseteqq L} \| \langle W \rangle^{1 / 2}_J (f, h_I)
     \|_{\mathbb{C}^d}^2 \frac{1}{| I |} > C_2 \langle \| \langle W
     \rangle^{1 / 2}_J f \|_{\mathbb{C}^d} \rangle^2_J \]
  but for all $L' \supsetneqq L$
  \[ \| \langle W \rangle^{1 / 2}_{L'} \langle W \rangle^{- 1 / 2}_J \|
     \leqslant C_1 \]
  and
  \[ \sum_{I : I \supseteqq L'} \| \langle W \rangle^{1 / 2}_J (f, h_I)
     \|_{\mathbb{C}^d}^2 \frac{1}{| I |} \leqslant C_2 \langle \| \langle W
     \rangle^{1 / 2}_J f \|_{\mathbb{C}^d} \rangle^2_J . \]
  Consider $\mathcal{S}^1$, the collection of all first stopping intervals.
  Our final sum splits after this step
  \begin{equation*}
\begin{split}
   \sum_I  &\ \| \langle W \rangle^{1 / 2}_I (f, h_I) \|^2_{\mathbb{C}^d} \\
   &=\sum_{I : \forall L \in \mathcal{S}^1 : I \varsubsetneqq L} \| \langle W
     \rangle^{1 / 2}_I (f, h_I) \|^2_{\mathbb{C}^d} + \sum_{I : \exists L \in
     \mathcal{S}^1 : I \subseteqq L} \| \langle W \rangle^{1 / 2}_I (f, h_I)
     \|^2_{\mathbb{C}^d} .
\end{split}
\end{equation*}
  We estimate the first sum.
  \begin{eqnarray*}
    \sum_{I : \forall L \in \mathcal{S}^1 : I \varsubsetneqq L} &  &  \langle
    \langle W \rangle_I (f, h_I), (f, h_I) \rangle_{\mathbb{C}^d} \\
    & \leqslant
    & \sum_{I : \forall L \in \mathcal{S}^1 : I \varsubsetneqq L} \| \langle W
    \rangle^{1 / 2}_I \langle W \rangle_J^{- 1 / 2} \|^2  \langle \langle W
    \rangle_J (f, h_I), (f, h_I) \rangle_{\mathbb{C}^d}\\
    & \leqslant & C^2_1 \sum_{I : \forall L \in \mathcal{S}^1 : I
    \varsubsetneqq L} \langle \langle W \rangle_J (f, h_I), (f, h_I)
    \rangle_{\mathbb{C}^d}\\
    & = & C^2_1 \int_J \sum_{I : \forall L \in \mathcal{S}^1 : I
    \varsubsetneqq L} \| \langle W \rangle^{1 / 2}_J (f, h_I)
    \|_{\mathbb{C}^d}^2 \frac{\chi_I (x)}{| I |} \mathd x\\
    & \leqslant & C^2_1 C_2 \langle \| \langle W \rangle^{1 / 2}_J f
    \|_{\mathbb{C}^d} \rangle^2_J | J | .
  \end{eqnarray*}
  The first step is a triangle inequality for norms, the second step uses the
  first stopping condition and the last step uses the second stopping
  condition and a resulting pointwise estimate.
    By iteration we have the domination
  \[ \sum_I \langle \langle W \rangle_I (f, h_I), (f, h_I)
     \rangle_{\mathbb{C}^d} \leqslant C^2_1 C_2 \sum_{L \in \mathcal{S}}
     \langle \| \langle W \rangle^{1 / 2}_L f \|_{\mathbb{C}^d} \rangle^2_L |
     L |, \]
  precisely the integrated form of $S_{3, W}$, provided $\mathcal{S}$ is
  sparse.
  
  It remains to show the collection $\mathcal{S}$ is sparse for large enough
  $C_1$ and $C_2$. The collection stemming from
  \[ \sum_{I : L \subseteqq I \subseteqq J} \| \langle W \rangle^{1 / 2}_J (f,
     h_I) \|_{\mathbb{C}^d}^2 \frac{1}{| I |} > C_2 \langle \| \langle W
     \rangle^{1 / 2}_J f \|_{\mathbb{C}^d} \rangle^2_J \]
  is sparse for large enough $C_2$ because of the (unweighted) weak type boundedness of $S:L^1_{\mathbb{C}^d}\to L^1_{\mathbb{R}}$ (with a universal constant), where $S$ is the
  standard square function
  \[
     Sg(x):=\sum_{I\in\mathcal D} \| (g,
     h_I) \|_{\mathbb{C}^d}^2 \frac{\chi_I(x)}{| I |},
  \]
  now applied to $g=\langle W \rangle^{1 / 2}_J f$; indeed, the above stopping condition means that $Sg(x)>\sqrt{C_2} \langle \|g \|_{\mathbb{C}^d} \rangle_J$ for all $x\in L$ and thus the union of these intervals $L$ is contained in $\{Sg>\sqrt{C_2} \langle \| g \|_{\mathbb{C}^d} \rangle_J\}$, which has measure at most $C_2^{-1/2}\|S\|_{L^1_{\mathbb{C}^d}\to L^1_{\mathbb{R}}}$.
  
   The collection stemming from $\| \langle W \rangle^{1 /
  2}_L \langle W \rangle^{- 1 / 2}_J \| > C_1$ is sparse because
  \begin{eqnarray*}
    C_1^2 \sum_{L } | L |
    & < & \sum_{L } | L | \| \langle W \rangle^{1 / 2}_L \langle W \rangle^{- 1 / 2}_J \|^2  \\
    & \leq & \sum_{L } | L | \| \langle W \rangle^{1 / 2}_L \langle W \rangle^{- 1 / 2}_J \|_{S_2}^2  \\
    & = & \sum_L | L | \tmop{tr} (\langle W \rangle^{- 1 / 2}_J \langle W
    \rangle_L \langle W \rangle^{- 1 / 2}_J)\\
    & = & \sum_L \int_L \tmop{tr} (\langle W \rangle^{- 1 / 2}_J W (x)
    \langle W \rangle^{- 1 / 2}_J) \mathd x\\
    & \leqslant & \int_J \tmop{tr} (\langle W \rangle^{- 1 / 2}_J W (x) \langle W \rangle^{- 1 / 2}_J) \mathd x\\
    & = & \tmop{tr} (I_d) |J|  = d | J |.
  \end{eqnarray*}
  The first inequality uses the first stopping condition, then we dominate the operator norm by the Hilbert Schmidt norm (denoted by $\| \cdot \|_{S_2}$). In the sequel we use the linearity of trace and disjointness of stopping intervals.
\end{proof}

\section*{Acknowledgments}
\thanks{
This research was conducted during the authors' NSF-supported participation in the Spring 2017 Harmonic Analysis Program at the Mathematical Sciences Research Institute in Berkeley, California. 

\bibliographystyle{amsplain}

\

%
%
%

\begin{dajauthors}
\begin{authorinfo}[T.H.]
  Tuomas Hyt\"onen\\
  University of Helsinki\\
  Helsinki, Finland\\
  tuomas.hytonen\imageat{}helsinki\imagedot{}fi \\
 \end{authorinfo}
\begin{authorinfo}[S.P.]
  Stefanie Petermichl\\
  Universit\'e Paul Sabatier\\
  Toulouse, France\\
  stefanie.petermichl\imageat{}math.univ-toulouse\imagedot{}fr \\
\end{authorinfo}
\begin{authorinfo}[A.V.]
  Alexander Volberg\\
  Michigan Sate University\\
  East Lansing, MI, U.S.A\\
  volberg\imageat{}math\imagedot{}msu\imagedot{}edu\\
 \end{authorinfo}
\end{dajauthors}

\end{document}